\documentclass[12pt]{article}

\usepackage{latexsym,amssymb,upref,amsmath,amsthm, amsfonts,authblk}
\usepackage{amssymb,amsmath,amsthm, calc, graphicx}
\usepackage{epsfig}
\usepackage{breqn}
\usepackage{footnpag}
\usepackage{rotating}
\usepackage{amsfonts}
\usepackage{setspace}
\usepackage{fullpage}
\usepackage{enumitem}
\usepackage{bbold}
\usepackage{comment}
\usepackage{pgf,tikz}
\usepackage{mathrsfs}
\usetikzlibrary{arrows}
\usepackage{yhmath}
\usepackage{hyperref}
\usepackage{authblk}

\bibliographystyle{plain}


\newtheorem{thm}{Theorem}

\newtheorem{claim}{Claim}

\newtheorem{remark}[thm]{Remark}

\newtheorem{prop}[thm]{Proposition}

\newcommand{\thistheoremname}{}
\newtheorem*{genericthm*}{\thistheoremname}
\newenvironment{namedthm*}[1]
{\renewcommand{\thistheoremname}{#1}%
	\begin{genericthm*}}
	{\end{genericthm*}}


\newcommand{\abs}[1]{\left\lvert{#1}\right\rvert}

\title{Tur\'an number of an induced complete bipartite graph plus an odd cycle}

\author
{
Beka Ergemlidze
\thanks{ Department of Mathematics, Central European University, Budapest.
		E-mail: \texttt{beka.ergemlidze@gmail.com}} \qquad 
Ervin Gy\H{o}ri \thanks{R\'enyi Institute, Hungarian Academy of Sciences and 
	Department of Mathematics, Central European University, Budapest. E-mail: \texttt{gyori.ervin@renyi.mta.hu}} \qquad 
Abhishek Methuku \thanks{Department of Mathematics, Central European University, Budapest. E-mail: \texttt{abhishekmethuku@gmail.com}} 
}

\begin{document}

\maketitle

\begin{abstract}
 Let $k \ge 2$ be an integer. We show that if $s = 2$ and $t \ge 2$, or $s = t = 3$, then the maximum possible number of edges in a $C_{2k+1}$-free graph containing no induced copy of $K_{s,t}$ is asymptotically equal to $(t - s + 1)^{1/s}(n/2)^{2-1/s}$ except when $k = s =  t = 2$.
 
 This strengthens a result of Allen, Keevash, Sudakov and Verstra\"{e}te \cite{Allen_Keevash} and answers a question of Loh, Tait and Timmons \cite{Loh_Tait_Timmons}.
\end{abstract}

\section{Introduction}

Let $\mathcal F$ be a family of graphs. A graph is called $\mathcal F$-free if it does not contain any member of $\mathcal F$ as a subgraph. The  \emph{Tur\'an number} of $\mathcal F$ is the maximum number of edges in an $\mathcal F$-free graph on $n$ vertices and is denoted by $ex(n, \mathcal F)$.

The classical theorem of K\H{o}v\'ari, S\'os and Tur\'an \cite{KST1954} concerning the Tur\'an number of complete bipartite graphs states that $ex(n, K_{s,t}) \le \frac{1}{2} (t-1)^{1/s} \cdot n^{2-1/s} + O(n)$, where $s \le t$. Koll\'ar, R\'onyai and Szab\'o \cite{KRSZ1996} provided a lower bound matching the order of magnitude when $t > s!$, and later Alon, R\'onyai, and Szab\'o \cite{Alon_Ronyai_Szabo} provided a matching lower bound when $t > (s-1)!$. 

Let $ex(a, b, \mathcal F)$ denote the maximum number of edges in an $a$ by $b$ bipartite $\mathcal F$-free graph and let $ex_{bip}(n, \mathcal F)$ denote the maximum number of edges in an $n$-vertex bipartite $\mathcal F$-free graph.  F\"uredi \cite{F1996,FurediZarank} showed that if $a \le b$, then $ex(a, b, K_{s,t}) \le  (t - s + 1)^{1/s}ab^{1-1/s} + sa + sb^{2-2/s}$ for all $a \ge s$, $b \ge t$, $t \ge s \ge 2$. This easily implies the following.
\begin{equation}
\label{bipartiteturan}
ex_{bip}(n, K_{s,t}) \le (t - s + 1)^{1/s} (n/2)^{2-1/s} (1+o(1)).
\end{equation}


In the cases $s = 2$ and $s = t = 3$ asymptotically sharp values are known: F\"uredi \cite{F1996} determined the asymptotics for the Tur\'an number of $K_{2,t}$ by showing that for any fixed $t \ge 2$,  we have  $ex(n, K_{2,t}) = \sqrt{t-1} n^{3/2}/2 + O(n^{4/3}).$ By an example of Brown \cite{Brown} for the lower bound and a theorem of  F\"uredi \cite{FurediZarank} for the upper bound, it is known that $ex(n,K_{3,3})= n^{5/3}/2 +O(n^{5/3-c})$ for some $c>0$. 

Erd\H{o}s and Simonovits \cite{Erd_Simonovits} conjectured that given any family $\mathcal F$ of graphs, there exists $k \ge 1$ such that $ex(n, \mathcal F \cup \{C_3, C_5, \ldots, C_{2k+1}\})$ is equal to $ex_{bip}(n, \mathcal F)$ asymptotically. Allen, Keevash, Sudakov and Verstra\"{e}te \cite{Allen_Keevash} proved this conjecture for complete bipartite graphs $K_{2,t}$ and $K_{3,3}$ in a stronger form. 


\begin{thm}[Allen, Keevash, Sudakov and Verstra\"{e}te \cite{Allen_Keevash}]
\label{noninduced}
Let $k \ge 2$ be an integer. If $s = 2$ and $t \ge 2$, or $s = t = 3$, then $$ex(n, \{ C_{2k+1}, K_{s,t}\}) = (t - s + 1)^{1/s}(n/2)^{2-1/s} (1+o(1)).$$
\end{thm}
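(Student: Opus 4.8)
The plan is to establish the matching upper and lower bounds separately; the lower bound is immediate, and essentially all the work lies in the upper bound.

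\textbf{Lower bound.} In each of the listed cases asymptotically optimal $K_{s,t}$-free \emph{bipartite} graphs are known to exist: for $s=2$ these are F\"uredi's constructions, and for $s=t=3$ the algebraic (norm/incidence) constructions, attaining $ex_{bip}(n,K_{s,t}) = (t-s+1)^{1/s}(n/2)^{2-1/s}(1+o(1))$, i.e.\ the upper bound \eqref{bipartiteturan} is tight here. A bipartite graph contains no odd cycle at all, in particular no $C_{2k+1}$, so each such graph is admissible and already witnesses $ex(n,\{C_{2k+1},K_{s,t}\}) \ge (t-s+1)^{1/s}(n/2)^{2-1/s}(1+o(1))$. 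Thus the content of the theorem is that forbidding $C_{2k+1}$ \emph{in addition} to $K_{s,t}$ drops the extremal number from the Kővári–Sós–Turán value down to the strictly smaller bipartite value.

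\textbf{Upper bound; reduction to a dense graph of large minimum degree.} Write $C = (t-s+1)^{1/s}2^{-(2-1/s)}$, so the target is $C\,n^{2-1/s}(1+o(1))$. Suppose for contradiction that for some fixed $\epsilon>0$ there are arbitrarily large $n$ and a $\{C_{2k+1},K_{s,t}\}$-free graph $G$ on $n$ vertices with $e(G)\ge (1+\epsilon)C\,n^{2-1/s}$. First I would pass to a dense subgraph of large minimum degree by the standard peeling argument: repeatedly delete any vertex whose degree is below $(1+\epsilon/2)C(2-1/s)\,m^{1-1/s}$, where $m$ is the current number of vertices (this threshold is just below the derivative of the target). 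This maintains the invariant $e\ge (1+\epsilon/2)C\,m^{2-1/s}$, so the process terminates at a subgraph $H$ with $\delta(H)\ge (1+\epsilon/2)C(2-1/s)\,|V(H)|^{1-1/s}$ and $e(H)\ge (1+\epsilon/2)C\,|V(H)|^{2-1/s}$. Since the total number of deleted edges is below $(1+\epsilon/2)C\,n^{2-1/s}$ while $e(G)\ge (1+\epsilon)C\,n^{2-1/s}$, we get $e(H)=\Omega(n^{2-1/s})$, hence (using $e(H)\le \binom{|V(H)|}{2}$) also $|V(H)|\to\infty$. Thus $H$ is $\{C_{2k+1},K_{s,t}\}$-free, has order tending to infinity, minimum degree $\delta(H)=\Omega(|V(H)|^{1-1/s})$, and still exceeds the bipartite bound for its own order by the fixed factor $1+\epsilon/2$.

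\textbf{The crux: $H$ is (almost) bipartite.} The heart of the argument is to show that such an $H$ can be made bipartite by deleting only $o(|V(H)|^{2-1/s})$ edges; granting this, the remaining bipartite $K_{s,t}$-free graph has at most $(t-s+1)^{1/s}(|V(H)|/2)^{2-1/s}(1+o(1))$ edges by \eqref{bipartiteturan}, so $e(H)\le C\,|V(H)|^{2-1/s}(1+o(1)) < (1+\epsilon/2)C\,|V(H)|^{2-1/s}$, contradicting the lower bound on $e(H)$. The first step toward near-bipartiteness is to kill all \emph{short} odd cycles: if $H$ contained an odd cycle of length $\ell\le 2k+1$, then, since $2k+1-\ell$ is even, I would repeatedly lengthen it by $2$ — replacing an edge $uv$ of the cycle by an internally disjoint path $u$–$w_1$–$w_2$–$v$ through two fresh vertices — until reaching length exactly $2k+1$, contradicting $C_{2k+1}$-freeness; the bound $\delta(H)=\Omega(|V(H)|^{1-1/s})$ with $|V(H)|\to\infty$ supplies enough room to keep the $O(k)$ detour vertices disjoint, and $K_{s,t}$-freeness controls how many vertices must be avoided at each step. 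In particular $H$ is triangle-free and has odd girth exceeding $2k+1$. The remaining, delicate step is to control odd cycles at the larger scales: running a breadth-first search from a vertex, the absence of odd cycles of length $\le 2k+1$ forces no edge inside any level $L_i$ with $i\le k$ (such an edge would yield an odd closed walk of length $2i+1\le 2k+1$), so the levels $L_0,\dots,L_{k+1}$ expand, and one feeds the Kővári–Sós–Turán-type codegree bound coming from $K_{s,t}$-freeness into this expansion to bound the ``bad'' within-level edges and rule out odd cycles through the remaining levels under the stated parameters.

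\textbf{The main obstacle.} The genuinely hard part is exactly this last step, and it is where the parameter restrictions live. When there are many clean BFS levels (e.g.\ $k\ge 3$, or $s=t=3$) the expansion is strong enough that a non-bipartite $H$ would need more than $|V(H)|$ vertices, forcing $H$ bipartite outright; but when $k=2$ the expansion is only borderline, and one must instead bound the within-level edges directly and argue near-bipartiteness rather than strict bipartiteness. The margin in this estimate is precisely tight at $k=s=t=2$: there the codegree and expansion bounds no longer beat the target density, which is why that case must be excluded (and indeed admits a denser non-bipartite $\{C_5,K_{2,2}\}$-free construction). Making the lengthening argument fully rigorous while keeping all detour paths internally disjoint, and pushing the expansion/codegree count so that it separates the admissible cases from $k=s=t=2$, is where I expect the bulk of the technical effort to go.
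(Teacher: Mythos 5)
There is a genuine gap, and it sits exactly at the point you yourself flag as ``the crux.'' Your final step --- feeding a K\H{o}v\'ari--S\'os--Tur\'an-type codegree bound into BFS level expansion to ``rule out odd cycles through the remaining levels'' --- \emph{is} the theorem; everything before it (peeling to large minimum degree, the contradiction scheme, invoking \eqref{bipartiteturan} at the end) is routine. You never state the expansion lemma, never quantify how the codegree bound interacts with the level sizes, and the dichotomy you assert ($k\ge 3$ forcing bipartiteness outright, $k=2$ needing near-bipartiteness, $k=s=t=2$ failing) is described rather than derived, so the proposal is an outline of the Allen--Keevash--Sudakov--Verstra\"ete strategy, not a proof. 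Worse, the one preparatory step you do spell out is incorrect: lengthening an odd cycle by replacing an edge $uv$ with a path $u$--$w_1$--$w_2$--$v$ requires $w_1\in N(u)$, $w_2\in N(v)$ and $w_1w_2\in E$, i.e., a four-cycle $uw_1w_2v$ through the edge $uv$. In a $K_{2,2}$-free graph --- the case $s=t=2$, $k\ge 3$, which the theorem covers --- there are no four-cycles at all, so this rerouting is never available, and even for $t\ge 3$ large minimum degree does not guarantee an edge between $N(u)$ and $N(v)$ for any particular edge $uv$ of the cycle. So your route to ``odd girth exceeding $2k+1$'' collapses precisely in a case the theorem includes; the actual elimination of short odd cycles in arguments of this type requires considerably more care (and is one of the places where the smooth-family machinery of \cite{Allen_Keevash} does real work).

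For comparison: the paper does not prove Theorem \ref{noninduced} at all --- it quotes it from \cite{Allen_Keevash}, whose proof indeed proceeds via a near-bipartiteness/stability analysis in the spirit of your sketch, but with the heavy lifting you deferred actually carried out. What the paper contributes is a much lighter independent proof of the $s=2$ case (for $(k,t)\neq(2,2)$), obtained as a byproduct of Theorem \ref{mainthm}: first delete every edge lying in a triangle --- by the Gy\H{o}ri--Li bound \cite{Gyori_Li} a $C_{2k+1}$-free graph has $O(n^{1+1/k})$ triangles, so for $k\ge 3$ this costs only $o(n^{3/2})$ --- and then, in the resulting triangle-free graph, bound the average degree $d$ by counting 3-walks via Blakley--Roy: some vertex $v$ starts $d^3$ walks, almost all are ``good'' (ending in $N_3(v)$; the bad ones are controlled by Erd\H{o}s--Gallai inside $N_2(v)$ together with a two-coloring trick that converts a long path in $N_2(v)$ into a $C_{2k+1}$), and the last edges of good 3-walks form a bipartite $K_{2,t}$-free graph between $N_2(v)$ and $N_3(v)$, to which \eqref{bipartiteturan} applies; the case $k=2$, $t\ge 3$ is handled by a separate good-3-path count. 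If your aim is the stated theorem rather than the stronger smooth-family result, that decomposition avoids BFS expansion and near-bipartiteness entirely --- though note it does not reach $s=t=3$, for which the paper simply relies on the citation to \cite{Allen_Keevash}.
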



In fact, they proved a more general result for all \emph{smooth} families. Moreover, they also showed that any extremal $\mathcal F$-free $C_{2k+1}$-free graph is near-bipartite, which means a negligible number of edges may be deleted from it to make it bipartite. We note that the case $s = t = k = 2$ was solved earlier by Erd\H{o}s and Simonovits \cite{Erd_Simonovits}. 

\vspace{2mm}

In the rest of the paper we use the following asymptotic notation. Given two functions $f(n)$ and $g(n)$, we write $f(n) \sim g(n)$ if $\lim_{n \to \infty} \frac{f(n)}{g(n)} \to 1$. Otherwise, we write $f(n) \not \sim g(n)$.

\subsection{Induced Tur\'an numbers}

Given a graph $F$, let $F\text{-ind}$ denote an induced copy of $F$. Loh, Tait and Timmons \cite{Loh_Tait_Timmons} introduced the problem of simultaneously forbidding an induced copy of a graph and a (not necessarily induced) copy of another graph. Let $ex(n, H, F\text{-ind})$ denote the maximum possible number of edges in an $n$-vertex graph containing no induced copy of $F$ and no copy of $H$ as a subgraph. 

The question of determining $ex(n, H, F\text{-ind})$ is related to the well-studied areas of Ramsey-Tur\'an Theory and Erd\H{o}s-Hajnal Conjecture. The Ramsey-Tur\'an number $RT(n,H,m)$, is defined as the maximum number of edges that an $n$-vertex graph with independence number less than $m$ may have without containing $H$ as a subgraph. If we let $F$ be an independent set of order $m$ then $ex(n, H, F\text{-ind}) = RT(n,H,m)$. So the problem of determining $ex(n, H, F\text{-ind})$ is a generalization of the Ramsey-Tur\'an problem. We refer the reader to \cite{Loh_Tait_Timmons} for a detailed discussion on this general problem and its relation to other areas.

Among other interesting things, Loh, Tait and Timmons showed the following.

\begin{thm}[Loh, Tait, Timmons \cite{Loh_Tait_Timmons}]
	\label{LMT}
For any integers $k \ge 2$ and $t \ge 2$ there is a constant $\beta_k$, depending only on $k$, such that
$$ex(n, \{C_{2k+1}, K_{2,t}\text{-ind}\}) \le (\alpha(k, t)^{1/2} + 1)^{1/2} \frac{n^{3/2}}{2} + \beta_k n^{1+1/2k}$$
where $\alpha(k,t)=(2k-2)(t-1)((2k-2)(t-1)-1)$.
\end{thm}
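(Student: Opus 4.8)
The plan is to bound $e(G)$ by counting cherries (paths on three vertices) and controlling codegrees. Write $d(v)$ for the degree of a vertex $v$, and for two vertices $x,y$ let $S(x,y)=N(x)\cap N(y)$ be their common neighbourhood, of size the codegree $c(x,y)=|S(x,y)|$. Counting each cherry at its centre gives the identity
$$\sum_{v}\binom{d(v)}{2}=\sum_{\{x,y\}}c(x,y)=\sum_{\{x,y\}:\,x\not\sim y}c(x,y)+\sum_{xy\in E(G)}c(x,y),$$
so it suffices to bound the two sums on the right and then invoke convexity. The heart of the argument is a uniform bound on the codegree of the \emph{non-adjacent} pairs.

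For the codegree lemma, fix $x\not\sim y$ and put $S=S(x,y)$; two forbidden configurations constrain $G[S]$. First, since $x\not\sim y$ and every vertex of $S$ is adjacent to both $x$ and $y$, any independent set of size $t$ inside $S$ together with $x,y$ spans an induced $K_{2,t}$; hence $G[S]$ has independence number at most $t-1$. Second, I claim $G[S]$ contains no path on $2k-1$ vertices. Indeed, given such a path $v_1v_2\cdots v_{2k-1}$, the closed walk
$$x\,v_1\,v_2\cdots v_{2k-2}\,y\,v_{2k-1}\,x$$
uses the path edges $v_1\cdots v_{2k-2}$ together with $xv_1,\ v_{2k-2}y,\ yv_{2k-1},\ v_{2k-1}x$ (all present because $v_1,v_{2k-2},v_{2k-1}\in S$), visits $2k+1$ distinct vertices, and is therefore a copy of $C_{2k+1}$, a contradiction. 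Combining the two facts through the Gallai--Milgram theorem (a graph with independence number at most $t-1$ has a path on at least $|S|/(t-1)$ vertices) gives $|S|/(t-1)\le 2k-2$, that is
$$c(x,y)\le (2k-2)(t-1)\qquad\text{whenever }x\not\sim y.$$

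For the adjacent pairs I would use that $\sum_{xy\in E}c(x,y)=\sum_v e(G[N(v)])$ (both equal three times the number of triangles of $G$). A path on $2k$ vertices in $N(v)$ would close up with $v$ into a $C_{2k+1}$, so each $G[N(v)]$ is $P_{2k}$-free, and the Erd\H{o}s--Gallai theorem bounds its number of edges by $(k-1)d(v)$; summing yields $\sum_{xy\in E}c(x,y)\le 2(k-1)e(G)$, a lower-order contribution. Feeding both bounds into the cherry identity and applying convexity of $\binom{\cdot}{2}$ with $\bar d=2e(G)/n$,
$$n\binom{\bar d}{2}\le \sum_v\binom{d(v)}{2}\le (2k-2)(t-1)\binom{n}{2}+2(k-1)e(G),$$
a quadratic inequality in $e(G)$ whose solution is $e(G)\le \tfrac12\sqrt{(2k-2)(t-1)}\,n^{3/2}+O_k(n)$. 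Since $(2k-2)(t-1)\le \alpha(k,t)^{1/2}+1$ and $O_k(n)\le \beta_k n^{1+1/2k}$ for a suitable $\beta_k$, the stated inequality follows.

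The one genuinely nonroutine step is the codegree lemma, and within it the explicit $C_{2k+1}$ obtained by detouring a path of $S$ through both $x$ and $y$: getting the parity and the vertex-disjointness exactly right, so that a path on $2k-1$ (rather than merely $2k$) vertices is already forbidden, is what sharpens the path-length bound to $2k-2$ and hence the codegree bound to $(2k-2)(t-1)$. Everything else—the Gallai--Milgram and Erd\H{o}s--Gallai inputs and the closing convexity computation—is standard. I would also note that this route in fact produces a constant no larger than the one claimed, so the theorem holds a fortiori.
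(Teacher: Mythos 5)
Your proof is correct, but note that the paper itself contains no proof of this statement: Theorem \ref{LMT} is quoted from Loh, Tait and Timmons \cite{Loh_Tait_Timmons}, so the comparison is against that source rather than anything in this paper. Your argument stands on its own, and I checked the delicate points: the cycle $x\,v_1\cdots v_{2k-2}\,y\,v_{2k-1}\,x$ does have exactly $1+(2k-3)+1+1+1=2k+1$ edges on $2k+1$ distinct vertices (distinctness uses $x,y\notin S$, which holds since there are no loops, and the edge $xy$ is never used, so non-adjacency of $x,y$ is not even needed there), so $G[S]$ indeed has no path on $2k-1$ vertices; the induced-$K_{2,t}$ step does use $x\not\sim y$, correctly; Gallai--Milgram (applied to an arbitrary orientation) then gives $c(x,y)\le(2k-2)(t-1)=:q$ for non-adjacent pairs; and the identity $\sum_{xy\in E}c(x,y)=\sum_v \abs{G[N_1(v)]}=3\cdot\#\text{triangles}$ with the $P_{2k}$-free neighborhoods and Erd\H{o}s--Gallai gives the $2(k-1)\abs{E(G)}$ term. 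The closing quadratic yields $\abs{E(G)}\le\frac{\sqrt q}{2}n^{3/2}+\frac{(2k-1)}{2}n$, and since $q\le\sqrt{q(q-1)}+1=\alpha(k,t)^{1/2}+1$ (equivalent to $\sqrt{q-1}\le\sqrt q$), your bound is indeed at least as strong as the stated one, with an error term $O_k(n)$ depending only on $k$ as required.

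Two points of comparison are worth recording. First, the factorization $\alpha(k,t)=q(q-1)$ with $q=(2k-2)(t-1)$ shows that the same codegree lemma for non-adjacent pairs underlies the original proof; where you likely diverge from \cite{Loh_Tait_Timmons} is in handling the adjacent pairs: the exponent $1+1/2k$ in their error term is consistent with controlling that contribution via a triangle count of order $n^{1+1/k}$ (as in the Gy\H{o}ri--Li bound this paper uses in Proposition \ref{indtononind}), entering under a square root, whereas your per-neighborhood Erd\H{o}s--Gallai bound is more elementary and gives the smaller linear error. Second, your codegree lemma specialized to $k=2$ gives $2(t-1)$, while the paper's Claim \ref{limited_cherries} obtains the sharper $\max(3,t)-1$ by a different case analysis (an edge inside the common neighborhood plus a third common neighbor already closes a $C_5$); that sharpening is exactly what lets the paper beat the Theorem \ref{LMT}-type constant and prove the asymptotically tight Theorems \ref{mainthm} and \ref{C_4C_5}, so your method, while correct and slightly stronger than the cited bound, would not by itself recover the paper's main results.
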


They asked whether Theorem \ref{LMT} determines the correct growth rate in $k$, and showed that it determines the correct growth rate in $n$ and $t$. 

\vspace{2mm}

In this paper we answer their question by showing that $ex(n, \{C_{2k+1}, K_{2,t}\text{-ind}\})$ does not depend on $k$ asymptotically,  thus improving the upper bound in Theorem \ref{LMT} significantly. Our main result determines $ex(n, \{C_{2k+1}, K_{2,t}\text{-ind}\})$ asymptotically in all the cases except when $k = t = 2$, and is stated below.

\begin{thm}
	\label{mainthm}
For any integers $k \ge 2$, $t \ge 2$ where $(k, t) \not = (2, 2)$, we have
$$ex(n, \{C_{2k+1}, K_{2,t}\text{-ind}\}) = \sqrt{t-1} \cdot \left(\frac{n}{2}\right)^{3/2} (1+o(1)).$$
\end{thm}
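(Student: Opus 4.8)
The plan is to prove matching bounds, with all of the work in the upper bound. For the lower bound, observe that any bipartite $K_{2,t}$-free graph is automatically $C_{2k+1}$-free (having no odd cycle) and contains no induced $K_{2,t}$ (having no $K_{2,t}$ at all); a balanced bipartite construction attaining the bound in \eqref{bipartiteturan}, which is known to exist for each fixed $t$, therefore gives $ex(n,\{C_{2k+1},K_{2,t}\text{-ind}\}) \ge \sqrt{t-1}(n/2)^{3/2}(1+o(1))$. For the upper bound I would reduce everything to a single \emph{codegree inequality}: if I can show that for an extremal graph $G$ with $e$ edges one has $\sum_{x} \binom{d(x)}{2} \le (t-1)\,\tfrac{n^2}{4}\,(1+o(1))$, then convexity gives $\sum_x \binom{d(x)}{2} \ge n\binom{2e/n}{2} \sim 2e^2/n$, and comparing the two yields $e \le \sqrt{t-1}(n/2)^{3/2}(1+o(1))$, as required. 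Here I read $\sum_x\binom{d(x)}2 = \sum_{\{u,v\}}\mathrm{codeg}(u,v)$ as a count of cherries (paths $u\text{-}x\text{-}v$), so the task is to bound this weighted pair-count.

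The first key step, which is where the induced hypothesis is converted into usable structure, is a codegree bound for non-adjacent pairs. If $N(v)$ contained a path on $2k$ vertices $x_1 x_2 \cdots x_{2k}$, then $v\,x_1 x_2\cdots x_{2k}\,v$ would be a copy of $C_{2k+1}$; hence $G[N(v)]$ contains no such path, so by the Erd\H{o}s--Gallai theorem $e(G[N(v)]) \le (k-1)\,d(v)$. Now fix non-adjacent $u,v$ and set $S = N(u)\cap N(v)$. Since $S\subseteq N(u)$ we also have $e(G[S])\le (k-1)|S|$, while the absence of an induced $K_{2,t}$ forces $\alpha(G[S]) \le t-1$. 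Feeding these into the Caro--Wei bound $\alpha(G[S]) \ge |S|^2/(2e(G[S])+|S|)$ gives $|S| \le (2k-1)(t-1)$, so every non-adjacent pair has codegree bounded by a constant; the same computation yields the refinement $\mathrm{codeg}(u,v) \le t-1 + \sqrt{2(t-1)\,e(G[S])}$, showing the codegree is essentially $t-1$ precisely when the common neighborhood is almost independent.

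The main obstacle is to promote this constant-codegree information to the sharp constant, i.e. to recover the factor $1/\sqrt2$ separating the bipartite bound from the general $K_{2,t}$-free bound $\tfrac12\sqrt{t-1}\,n^{3/2}$. For this I would prove, after a standard regularization removing a negligible set of low- and high-degree vertices, that $G$ is \emph{near-bipartite}: there is a partition $V = A\cup B$ with only a lower-order number of edges inside the parts. I expect this to be the technical heart, obtained by adapting the near-bipartiteness argument of Allen, Keevash, Sudakov and Verstra\"ete \cite{Allen_Keevash}, feeding in the codegree bound above in place of the $K_{2,t}$-free hypothesis they use. The genuinely delicate point, however, is not the partition itself but the control of the ``diamond/book'' configurations it leaves behind: a within-part edge $w_1 w_2$ may be a common neighbor of many same-side vertices, and such pairs have common neighborhoods that are \emph{not} independent, so the naive refined estimate and Cauchy--Schwarz only bound the total excess by $O(n^2)$ rather than $o(n^2)$. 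Overcoming this requires exploiting the \emph{global} $C_{2k+1}$-free condition together with the neighborhood sparsity $e(G[N(v)])\le(k-1)d(v)$ to show that such configurations are genuinely rare; for instance, when $k=2$, two common neighbors $u_1,u_2$ of $w_1,w_2$ sharing a further common neighbor $z$ already produce a forbidden $C_5$, and analogous longer obstructions appear for larger $k$. The case $(k,t)=(2,2)$ is excluded precisely because this mechanism degenerates there, paralleling the exclusion of $k=s=t=2$ in Theorem \ref{noninduced}.

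Finally I would assemble the codegree sum using the near-bipartite partition. Splitting $\sum_x\binom{d(x)}2 = \sum_{uv\in E}\mathrm{codeg}(u,v) + \sum_{\{u,v\}\notin E}\mathrm{codeg}(u,v)$, the first term equals three times the number of triangles, which is $O(e)=o(n^2)$ by $e(G[N(v)])\le(k-1)d(v)$. Every common neighbor of a cross pair ($u\in A$, $v\in B$) must be incident to a within-part edge, so the cross contribution is at most $\sum_x d_A(x)d_B(x)$, which is bounded by the maximum degree times the number of within-part edges and hence $o(n^2)$. For same-side pairs the common neighborhood lies almost entirely in the opposite part, so once the diamond configurations of the previous paragraph are shown to be rare, almost all such neighborhoods are independent and the refined estimate gives $\mathrm{codeg}(u,v)\le t-1$ up to a lower-order excess; summing the main term over the at most $\binom{|A|}{2}+\binom{|B|}{2}\le n^2/4$ same-side pairs yields $\sum_x\binom{d(x)}{2}\le (t-1)\tfrac{n^2}{4}+o(n^2)$. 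Combined with the convexity lower bound this completes the proof.
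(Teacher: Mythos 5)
Your lower bound is correct (and is the paper's), and the reduction of the upper bound to the codegree inequality $\sum_x \binom{d(x)}{2} \le (t-1)\frac{n^2}{4}(1+o(1))$ is sound, as are your local estimates: $e(G[N(v)])\le (k-1)d(v)$ via Erd\H{o}s--Gallai, and the Caro--Wei consequence $\mathrm{codeg}(u,v)\le (2k-1)(t-1)$ for non-adjacent pairs (this constant-codegree bound is essentially the mechanism already in Loh--Tait--Timmons, and by itself only gives the right order of magnitude, not the sharp constant). But the argument has two genuine gaps, both of which you flag yourself and neither of which you close. First, near-bipartiteness with $o(n^{3/2})$ within-part edges is asserted by ``adapting'' Allen--Keevash--Sudakov--Verstra\"ete, but their theorem applies to graphs that are actually $K_{2,t}$-free, and your $G$ is only induced-$K_{2,t}$-free; whether their smoothness machinery runs on a constant-codegree hypothesis alone is exactly what would have to be proved, and no proof is sketched. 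Second, even granting the partition, your own accounting shows the same-side pairs whose common neighborhood spans an edge leave an excess you can only bound by $O(n^2)$. The claim that such ``diamond'' configurations are rare rests on one local observation (for $k=2$, two common neighbors of an edge with a further common neighbor force a $C_5$) plus ``analogous longer obstructions'' for $k\ge 3$; that is an obstruction, not a counting argument. Nothing you prove controls $\sum_{xy\in E}\binom{\mathrm{codeg}(x,y)}{2}$, which is what your Cauchy--Schwarz step needs, and codegrees of \emph{adjacent} pairs are bounded by neither hypothesis: a book $K_2+\overline{K_m}$ contains no induced $K_{2,t}$ and no odd cycle longer than a triangle. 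So the two steps you yourself call the ``technical heart'' and the ``genuinely delicate point'' are precisely the missing proof.

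It is worth seeing how the paper avoids both difficulties: it never establishes near-bipartiteness at all. For $k\ge 3$ it deletes every edge lying in a triangle --- only $O(n^{1+1/k})=o(n^{3/2})$ edges, by the Gy\H{o}ri--Li bound on triangles in $C_{2k+1}$-free graphs --- and shows (Claim \ref{kst_removed_triangles_claim}) that the remainder is genuinely $K_{2,t}$-free; the sharp constant then comes not from a codegree sum but from a Blakley--Roy count of 3-walks in the triangle-free remainder, where the last edges of good 3-walks form a bipartite $K_{2,t}$-free graph between $N_2(v)$ and $N_3(v)$ to which \eqref{bipartiteturan} applies. For $k=2$ it counts good 3-paths starting at a single heavy edge, using only the elementary fact (Claim \ref{limited_cherries}) that non-adjacent pairs have codegree at most $\max(3,t)-1$; this is also where the exclusion of $(k,t)=(2,2)$ genuinely emerges, whereas in your outline it is merely asserted that the mechanism ``degenerates.'' If you want to salvage your scheme for $k\ge 3$, the triangle-edge deletion is the missing idea: after it, all common neighborhoods are independent, so $\mathrm{codeg}(u,v)\le t-1$ outright and the diamond problem vanishes --- but note that summing $t-1$ over all $\binom{n}{2}$ pairs still only yields $\frac{1}{2}\sqrt{t-1}\,n^{3/2}$, so you would still need near-bipartiteness (or a substitute, such as the paper's 3-walk argument) to recover the factor $n^2/4$ in place of $n^2/2$.
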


Note that this shows $ex(n, \{C_{2k+1}, K_{2,t}\text{-ind}\}) \sim ex(n, \{ C_{2k+1}, K_{2,t}\})$ for all $k \ge 2$, $t \ge 2$ except in the case $k = t = 2$, which is studied in the next theorem. Therefore, Theorem \ref{mainthm} is a strengthening of Theorem \ref{noninduced} in the case $s = 2$ (except in the case $k = t = 2$); thus our proof of Theorem \ref{mainthm} provides a new proof of Theorem \ref{noninduced} in this case.

\vspace{2mm}

To prove the lower bound in Theorem \ref{mainthm} we use the following construction.

\textbf{Construction of an induced-$K_{2,t}$-free and $C_{2k+1}$-free graph: }
Consider a bipartite $K_{2,t}$-free graph $G$ with $n/2$ vertices in each color class and containing $\sqrt{t-1} \cdot (n/2)^{3/2}  + O(n^{4/3})$ edges. The existence of such a graph is shown by F\"uredi in \cite{F1996}. Clearly, $G$ contains no $C_{2k+1}$ as it is bipartite and because it contains no copy of $K_{2,t}$, of course, it contains no induced copy of $K_{2,t}$ as well. 

\vspace{2mm}

In the case $k = t =2$, we give the following improvement on the upper bound in Theorem \ref{LMT}.

\begin{thm}
\label{C_4C_5}
$$\frac{2}{3\sqrt{3}} n^{3/2}(1+o(1)) \le ex(n, \{C_5, K_{2,2}\text{-ind}\}) \le \frac{n^{3/2}}{2}(1+o(1)).$$
\end{thm}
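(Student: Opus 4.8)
The plan is to first extract the local consequences of the two forbidden configurations. Suppose $G$ is $C_5$-free and contains no induced $K_{2,2}$ (i.e.\ no induced $C_4$). For nonadjacent $x,y$ with common neighbours $z_1,\dots,z_c$, the absence of an induced $C_4$ on $x,z_i,y,z_j$ forces every $z_iz_j$ to be an edge, so the $z_i$ form a clique; but if $c\ge 3$ then $x-z_1-y-z_2-z_3-x$ is a $C_5$. Hence \emph{every nonadjacent pair has at most two common neighbours, and when it has exactly two they are adjacent} (Lemma~L2). Second, if $N(v)$ contained a path $w_1w_2w_3w_4$ then $v-w_1-w_2-w_3-w_4-v$ would be a $C_5$; so $G[N(v)]$ has no $P_4$ and therefore \emph{each $G[N(v)]$ is a disjoint union of stars and triangles} (Lemma~L1). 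In particular $e(G[N(v)])\le d(v)$, so the number of triangles satisfies $3T=\sum_v e(G[N(v)])\le \sum_v d(v)=2e(G)$.

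\textbf{Lower bound construction.} For the lower bound I would ``double'' the incidence graph of a projective plane. Fix a prime power $q$, let $\mathcal P,\mathcal L$ be the points and lines of $PG(2,q)$, and set $N=|\mathcal P|=|\mathcal L|=q^2+q+1$. Take as vertices the points $\mathcal P$ together with, for every line $\ell$, a pair $b_\ell,c_\ell$; put the spine edge $b_\ell c_\ell$ for each $\ell$, and for every incident pair $(p,\ell)$ the two edges $pb_\ell,pc_\ell$. Then $n=3N$ and $|E|=N+2N(q+1)=(1+o(1))\,2q^{3}=(1+o(1))\,\tfrac{2}{3\sqrt3}\,n^{3/2}$, using that prime powers are dense enough to realise all $n$. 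This graph has no induced $C_4$ because every $4$-cycle uses some spine edge $b_\ell c_\ell$ as a chord, and it is $C_5$-free: contracting each pair $\{b_\ell,c_\ell\}$ to a single vertex yields the bipartite, $C_4$-free incidence graph of $PG(2,q)$, which has no odd closed walk, so (the spine edges forming a matching) any $5$-cycle must use exactly one spine edge and four ``page'' edges, which forces two distinct points to be collinear in two different lines — impossible.

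\textbf{Upper bound.} For the upper bound I would count cherries (paths of length two). Writing $c(x,y)=|N(x)\cap N(y)|$, we have $\sum_v\binom{d(v)}{2}=\sum_{xy\in E}c(x,y)+\sum_{xy\notin E}c(x,y)=3T+\sum_{xy\notin E}c(x,y)$. Lemma~L2 bounds each nonadjacent codegree by $2$, and the triangle bound $3T\le 2e$ from Lemma~L1 controls the first sum; feeding $\sum_v\binom{d(v)}2\le 2\binom n2+2e$ into the Cauchy--Schwarz inequality $\big(\sum_v d(v)\big)^2\le n\sum_v d(v)^2=n\big(2e+2\sum_v\binom{d(v)}2\big)$ yields $e\le(1+o(1))\tfrac{1}{\sqrt2}\,n^{3/2}$, which already improves Theorem~\ref{LMT} but falls short of the claimed constant $\tfrac12$.

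\textbf{The main obstacle.} The crux is removing this factor of $\sqrt2$, i.e.\ showing that the nonadjacent pairs of codegree exactly $2$ — equivalently, the ``books'' $z_1z_2$ plus an independent set of common neighbours that Lemmas~L1--L2 produce — cannot inflate $\sum_v\binom{d(v)}2$ beyond $\binom n2(1+o(1))$ in the relevant range. A single huge book shows $\sum_v\binom{d(v)}2$ can reach $(1+o(1))n^2$, so the sharpened estimate cannot hold verbatim; the resolution I would pursue is to first delete the edges incident to high-degree vertices (a double count using Lemma~L2 shows the set of vertices of degree $\ge K\sqrt n$ and all their incident edges contribute only $o(n^{3/2})$ edges), and then, on the bounded-degree remainder, to exploit the additional rigidity coming from Lemma~L2: the pages of any book have pairwise \emph{disjoint} neighbourhoods outside the spine, so $\sum_{a\in\Lambda}(d(a)-2)\le n$ for each book with page set $\Lambda$. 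Converting this packing constraint into the needed near-$C_4$-free codegree bound on average is exactly where the real work lies, and I expect it to be the hardest step; it is also the reason the theorem leaves a gap between $\tfrac{2}{3\sqrt3}$ and $\tfrac12$ rather than an exact asymptotic.
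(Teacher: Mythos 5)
Your lower bound is correct and is, in substance, the paper's own: the paper doubles one colour class of an extremal $C_4$-free bipartite graph on $n/3+n/3$ vertices and joins each old--new pair by an edge (the Bollob\'as--Gy\H ori construction), of which your doubled projective-plane incidence graph is exactly the concrete instance; your verifications of $C_5$-freeness and induced-$C_4$-freeness are sound, and your Lemmas L1 and L2 coincide with the paper's structural facts (L2 is Claim \ref{limited_cherries} with $t=2$, and L1 is the Erd\H{o}s--Gallai $P_4$-free neighbourhood bound used throughout Section \ref{C_5}).

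The upper bound, however, has a genuine gap, and you have located it yourself: cherry counting with the nonadjacent codegree bound $c(x,y)\le 2$ yields only $e\le \frac{1}{\sqrt2}n^{3/2}(1+o(1))$, your book example correctly shows that no second-moment count alone can reach the constant $\frac12$, and your proposed repair (pruning high-degree vertices, exploiting disjoint page neighbourhoods) is explicitly left unfinished. The idea you are missing is the paper's switch from a second-moment to a third-moment count. By the Blakley--Roy inequality a graph of average degree $d$ has at least $nd^3$ 3-walks, so after the standard reduction to minimum degree $\ge d/2$ and maximum degree $\le 6d$ (the latter via a neighbourhood argument combining your L1 and L2), some \emph{edge} $xy$ begins at least $2d^2$ 3-walks; all but $O(d)$ of these are good 3-paths, i.e.\ 3-paths whose second and fourth vertices are nonadjacent, since degenerate walks cost $O(d_{max})=O(d)$ and non-good ones are controlled by $\abs{G[N_1(x)]}+\abs{G[N_1(y)]}\le 2d_{max}$, which is your L1 (this is Claim \ref{good3paths_lower}). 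The decisive step --- precisely the factor-of-two efficiency your cherry count forfeits --- is Claim \ref{w}: at most $2$ good 3-paths starting with the edge $xy$, \emph{counting both orientations $xyzw$ and $yxzw$ together}, can end at any fixed vertex $w$. Indeed, two such paths of different orientations with distinct middle vertices close a $C_5$ ($xyz_1wz_2$), forcing all of them into a single orientation, and then three of them would give the nonadjacent pair $\{y,w\}$ (or $\{x,w\}$) three common neighbours, contradicting L2. Hence $2d^2-O(d)\le 2n$, so $d\le \sqrt{n}(1+o(1))$ and $e=nd/2\le \frac12 n^{3/2}(1+o(1))$: the codegree bound $2$ is applied to roughly $2d^2$ ordered walks emanating from one edge rather than to $\binom{n}{2}$ vertex pairs, which is exactly what recovers the $\sqrt2$ that the book configurations cost you.
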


To prove the lower bound, just as in \cite{Loh_Tait_Timmons}, we use the following example of Bollob\'as and Gy\H ori \cite{BolGy}. 

\textbf{Construction of an induced-$K_{2,2}$-free and $C_5$-free graph: }
Take a $C_4$-free bipartite graph $G_0$ on $n/3 + n/3$ vertices with about $(n/3)^{3/2}$ edges and double each vertex in one of the color classes and add an edge joining the old and the new copy to produce a graph $G$. It is easy to check that $G$ contains no $C_5$ and no induced copy of $C_4$. Moreover, $G$ contains approximately twice as many edges as $G_0$. 

\subsection{Organization of the paper}

We divide our proof of Theorem \ref{mainthm} into two cases: $k \ge 3$ and $k = 2$. The reason for doing so is explained in Remark \ref{split}. 

\vspace{2mm}

In Section \ref{notC_5}, we prove Theorem \ref{mainthm} in the case $k \ge 3$ and in Section \ref{C_5} we prove Theorem \ref{mainthm} in the case $k = 2$, along with Theorem \ref{C_4C_5}.  

\vspace{2mm}

Using Theorem \ref{mainthm}, Theorem \ref{noninduced} and Theorem \ref{C_4C_5}, we will show in the next section that for any given $k \ge 2$, $ex(n, \{C_{2k+1}, K_{s,t}\text{-ind}\})$ and $ex(n, \{C_{2k+1}, K_{s,t}\})$ are asymptotically equal whenever $s = 2$ and $t \ge 2$, or $s = t = 3$ except in the case $k = s = t = 2$; thus providing a strengthening of Theorem \ref{noninduced}.

\vspace{2mm}

In Section \ref{notation}, we introduce some notation used in our proofs as well as the Blakley-Roy inequality.

\section{$ex(n, \{C_{2k+1}, K_{s,t}\text{-ind}\})$ versus $ex(n, \{C_{2k+1}, K_{s,t}\})$}
\label{comparison}
First let us consider the case $s = 2$. As already noted, by comparing Theorem \ref{noninduced} and our Theorem \ref{mainthm}, it is easy to see that for all $k \ge 2$, $t \ge 2$ except when $k = t = 2$, we have
\begin{equation}
\label{eq:sis2}
ex(n, \{C_{2k+1}, K_{2,t}\text{-ind}\}) \sim ex(n, \{ C_{2k+1}, K_{2,t}\})
\end{equation}

Now consider the case $s= t = 3$. In Proposition \ref{indtononind} (in Section \ref{notC_5}) we prove that for any $k, s, t \ge 2$,
\begin{equation}
\label{eq:one}
ex(n, \{C_{2k+1}, K_{s,t}\text{-ind}\}) \le ex(n, \{C_3, C_{2k+1}, K_{s,t}\}) + 3c_k n^{1+1/k},
\end{equation}
where $c_k$ is a constant depending only on $k$.

It follows from Theorem \ref{noninduced} that for all $k \ge 2$, we have $ex(n, \{C_3, C_{2k+1}, K_{3,3}\}) \le (n/2)^{5/3} (1+o(1))$. Combining this with \eqref{eq:one} we get, $$ex(n, \{C_{2k+1}, K_{3,3}\text{-ind}\}) \le (n/2)^{5/3} (1+o(1))+ 3c_k n^{1+1/k}.$$
Now note that $3c_k n^{1+1/k} = o(n^{5/3})$ for all $k \ge 2$. Therefore,
$ex(n, \{C_{2k+1}, K_{3,3}\text{-ind}\}) \le (n/2)^{5/3} (1+o(1)) = ex(n, \{ C_{2k+1}, K_{3,3}\})$. This implies,
\begin{equation}
\label{eq:stis3}
ex(n, \{C_{2k+1}, K_{3,3}\text{-ind}\}) \sim ex(n, \{ C_{2k+1}, K_{3,3}\}).
\end{equation}

Therefore, \eqref{eq:sis2} and \eqref{eq:stis3} imply the following strengthening of Theorem \ref{noninduced} in all but one special case.

\begin{thm}
	Let $k \ge 2$ be an integer. If $s = 2$ and $t \ge 2$, or $s = t = 3$, then $$ex(n, \{C_{2k+1}, K_{s,t}\text{-ind}\}) = (t - s + 1)^{1/s}(n/2)^{2-1/s} (1+o(1)),$$
	except when $k = s =  t = 2$.
\end{thm}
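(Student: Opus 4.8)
The plan is to assemble this final theorem purely from results already established in the excerpt, since it is explicitly a consolidation statement. The target equality splits cleanly into two parameter regimes, and I would handle them in the order in which the building blocks become available. First I would dispose of the case $s = 2$, $t \ge 2$ (with $(k,t) \neq (2,2)$): here Theorem \ref{mainthm} gives $ex(n, \{C_{2k+1}, K_{2,t}\text{-ind}\}) = \sqrt{t-1}\,(n/2)^{3/2}(1+o(1))$, and plugging $s = 2$ into the formula of Theorem \ref{noninduced} gives $ex(n, \{C_{2k+1}, K_{2,t}\}) = (t-1)^{1/2}(n/2)^{3/2}(1+o(1))$. Since $(t-s+1)^{1/s} = (t-1)^{1/2}$ and $2 - 1/s = 3/2$ when $s=2$, these are the same leading constant, so equation \eqref{eq:sis2} holds and the two quantities are asymptotically equal. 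The only excluded subcase here is $k = t = 2$, which is exactly the $k=s=t=2$ exception named in the theorem.

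Next I would handle the case $s = t = 3$, where the argument runs through the upper-bound comparison \eqref{eq:one}. The key chain is: start from \eqref{eq:one} with $s=t=3$, namely $ex(n, \{C_{2k+1}, K_{3,3}\text{-ind}\}) \le ex(n, \{C_3, C_{2k+1}, K_{3,3}\}) + 3c_k n^{1+1/k}$; then invoke Theorem \ref{noninduced} (which controls the triangle-and-odd-cycle-free Tur\'an number, since forbidding $C_3$ is subsumed by the hypothesis) to get $ex(n, \{C_3, C_{2k+1}, K_{3,3}\}) \le (n/2)^{5/3}(1+o(1))$. The crucial numerical check is that the error term $3c_k n^{1+1/k}$ is $o(n^{5/3})$; this holds because $1 + 1/k \le 1 + 1/2 = 3/2 < 5/3$ for every $k \ge 2$, so the additive term is negligible against the main term. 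This yields the upper bound $ex(n, \{C_{2k+1}, K_{3,3}\text{-ind}\}) \le (n/2)^{5/3}(1+o(1))$. For the matching lower bound I would observe that any construction that is $C_{2k+1}$-free and $K_{3,3}$-free is automatically induced-$K_{3,3}$-free, so $ex(n, \{C_{2k+1}, K_{3,3}\text{-ind}\}) \ge ex(n, \{C_{2k+1}, K_{3,3}\}) = (n/2)^{5/3}(1+o(1))$; combined with the upper bound this gives \eqref{eq:stis3}.

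Finally I would combine the two regimes. Since $(t-s+1)^{1/s}(n/2)^{2-1/s}$ evaluates to $\sqrt{t-1}\,(n/2)^{3/2}$ when $s=2$ and to $(n/2)^{5/3}$ when $s=t=3$, both \eqref{eq:sis2} and \eqref{eq:stis3} state precisely that $ex(n, \{C_{2k+1}, K_{s,t}\text{-ind}\})$ equals $(t-s+1)^{1/s}(n/2)^{2-1/s}(1+o(1))$ in the respective ranges, with the sole exception $k=s=t=2$ inherited from Theorem \ref{mainthm}. This establishes the stated formula in all the claimed cases.

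I do not expect a genuine obstacle here, since every ingredient is quoted from earlier in the paper; the statement is a bookkeeping corollary rather than an independent result. If anything deserves care it is the verification that the leading constants from Theorem \ref{mainthm} and Theorem \ref{noninduced} literally coincide after substituting $s=2$ (rather than merely matching in order of magnitude), and the explicit confirmation that $3c_k n^{1+1/k} = o(n^{5/3})$ uniformly in the relevant range of $k$. Both are routine but are the only places where an error could creep in, so I would state them explicitly rather than leave them implicit.
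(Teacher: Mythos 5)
Your proposal is correct and follows essentially the same route as the paper: the case $s=2$, $t\ge 2$ is settled by directly comparing Theorem \ref{mainthm} with Theorem \ref{noninduced}, and the case $s=t=3$ by combining \eqref{eq:one} with the upper bound $ex(n,\{C_3,C_{2k+1},K_{3,3}\})\le (n/2)^{5/3}(1+o(1))$ from Theorem \ref{noninduced}, the estimate $3c_k n^{1+1/k}=o(n^{5/3})$ for $k\ge 2$, and the trivial lower bound $ex(n,\{C_{2k+1},K_{3,3}\text{-ind}\})\ge ex(n,\{C_{2k+1},K_{3,3}\})$. Your explicit verification of the matching constants and of the error-term estimate only makes precise the bookkeeping the paper leaves implicit.
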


Surprisingly, the case $k = s = t = 2$ is quite different. As noted before, in this case it is known by a theorem of Erd\H{o}s and Simonovits \cite{Erd_Simonovits} (or by Theorem \ref{noninduced}) that $ex(n, \{ C_{5}, K_{2,2}\}) = (n/2)^{3/2}(1+o(1))$ where as  $ex(n, \{C_5, K_{2,2}\text{-ind}\}) \ge (2n^{3/2}/{3\sqrt{3}}) (1+o(1))$ by the lower bound in Theorem \ref{C_4C_5} (as observed by Loh, Tait and Timmons in \cite{Loh_Tait_Timmons}). Therefore, in this very special case $$ex(n, \{C_5, K_{2,2}\text{-ind}\}) \not \sim ex(n, \{ C_{5}, K_{2,2}\}).$$

\section{Notation and the Blakley-Roy inequality}
\label{notation}

Let $G = (V(G), E(G))$ be a graph. For convenience, sometimes we refer to $E(G)$ by $G$. So sometimes we write $\abs{G}$ to denote the number of edges in $G$. Given a set $S \subseteq V(G)$, the subgraph of $G$ induced by $S$ is denoted $G[S]$. 

Given a graph and a vertex $v$ in it, the first neighborhood of $v$,  $N_1(v)$ is the set of vertices adjacent to $v$ and for $i \ge 2$, let $N_i(v)$ denote the set of vertices at distance exactly $i$ from $v$. Notice that for $i \not = j$, $N_i(v) \cap N_j(v) = \emptyset$.


A \emph{3-walk} is a sequence $v_0e_0v_1e_1v_2e_2v_3$ of vertices and edges such that $e_i = v_iv_{i+1}$ for $0 \le i \le 2$. For convenience we simply denote such a 3-walk by $v_0v_1v_2v_3$. The vertices $v_0$ and $v_3$ are called the first and last vertices of the 3-walk respectively and the edges $e_0$ and $e_2$ are called the first and last edges respectively. We say the $3$-walk starts with the edge $e_0$ and ends with the edge $e_2$. We refer to $v_0, v_1, v_2, v_3$ as first, second, third and fourth vertices of the walk respectively. Note that the 3-walks $v_0v_1v_2v_3$ and $v_3v_2v_1v_0$ are generally considered different. Also note that edges can be repeated in a 3-walk. A \emph{3-path} is a 3-walk with no repeated vertices or edges. 

Blakley and Roy \cite{Blakley_Roy} proved a matrix version of H\"older's inequality, which implies that any graph $G$ with average degree $d$ has at least $nd^3$ 3-walks.

\section{Proof of Theorem \ref{mainthm} and Theorem \ref{C_4C_5}}
\label{upperbound}

Let $G$ be a $C_{2k+1}$-free graph containing no induced copy of $K_{2,t}$.
In Section \ref{notC_5}, we consider the case $k \ge 3$ and in Section \ref{C_5} we consider the case $k = 2$.

\subsection{When $k \ge 3$}
\label{notC_5}
In this section we prove Theorem \ref{mainthm} in the case $k \ge 3$. We only need the following proposition for $s = 2$, but we will prove it in a more general form because we need it in Section \ref{comparison}.

\vspace{2mm}

\begin{prop}
	\label{indtononind}
For any $k, s, t \ge 2$ we have $$ex(n, \{C_{2k+1}, K_{s,t}\text{-ind}\}) \le ex(n, \{C_3, C_{2k+1}, K_{s,t}\}) + 3c_k n^{1+1/k},$$ where $c_k$ is a constant depending only on $k$.
\end{prop}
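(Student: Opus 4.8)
The plan is to reduce the induced problem to the triangle-free problem by deleting exactly the edges that lie in triangles. Let $G$ be an $n$-vertex $C_{2k+1}$-free graph with no induced copy of $K_{s,t}$, let $F \subseteq E(G)$ be the set of edges contained in at least one triangle, and set $G_0 = G - F$. By construction $G_0$ is triangle-free, and being a subgraph of $G$ it stays $C_{2k+1}$-free. The crucial point I would establish is that $G_0$ is also $K_{s,t}$-free (as a not-necessarily-induced subgraph): if $G_0$ contained a copy of $K_{s,t}$ with parts $A$ and $B$, then triangle-freeness of $G_0$ forces this copy to be induced in $G_0$ (an edge inside a part together with any vertex of the other part, which exists since $s,t \ge 2$, would close a triangle), so every cross edge between $A$ and $B$ lies in $G_0$ and hence in no triangle of $G$. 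Were $G$ to contain an extra edge inside, say, $A$ (an edge $a_1 a_2$), then for any $b \in B$ the cross edges $a_1 b, a_2 b \in G_0 \subseteq G$ would make $a_1 a_2 b$ a triangle of $G$, placing the cross edge $a_1 b$ in a triangle and contradicting $a_1 b \in G_0$. Ruling out extra edges inside $B$ the same way, $G[A \cup B]$ would be an induced $K_{s,t}$ in $G$, a contradiction. Thus $G_0$ is $\{C_3, C_{2k+1}, K_{s,t}\}$-free, which yields $\abs{G} = \abs{G_0} + \abs{F} \le ex(n, \{C_3, C_{2k+1}, K_{s,t}\}) + \abs{F}$.

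It then remains to show $\abs{F} \le 3 c_k n^{1+1/k}$, i.e. that a $C_{2k+1}$-free graph has only $O(n^{1+1/k})$ edges lying in triangles. I would derive this from the even-cycle Tur\'an bound (Bondy--Simonovits), $ex(n, C_{2k}) = O(n^{1+1/k})$, by showing that if the triangle-edge graph $F$ were too large then $G$ would contain a forbidden $C_{2k+1}$. The mechanism is edge expansion: given a cycle $v_1 v_2 \cdots v_{2k} v_1$ inside $F$ and a triangle apex $w$ of the edge $v_1 v_2$ with $w \notin \{v_1, \ldots, v_{2k}\}$, replacing $v_1 v_2$ by the two-edge path $v_1 w v_2$ produces a cycle of length $2k+1$ in $G$. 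Equivalently, it suffices to find a triangle $xyz$ whose base edge $xy$ lies on some $C_{2k}$ (an $x$--$y$ path of length $2k-1$) that avoids its apex $z$; then the path together with $x z y$ is a $C_{2k+1}$.

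The main obstacle is guaranteeing that the apex can be taken off the cycle, and a single copy of $C_{2k}$ in $F$ does not suffice for this. For instance, the octahedron $K_{2,2,2}$ is $C_7$-free, yet every edge lies in a triangle and it contains a $C_6$ with all apexes forced onto the cycle, so no $C_7$ arises. What should save the argument is abundance: once $\abs{F}$ exceeds a constant multiple of $ex(n, C_{2k})$, one can pass to a subgraph of $F$ of large minimum degree and invoke supersaturation for $C_{2k}$, so that some triangle-edge lies on a $C_{2k}$ avoiding its apex. Controlling this apex-avoidance is precisely where a constant-factor loss (the factor $3$) enters and is the step demanding the most care; the surrounding estimates are routine applications of the even-cycle bound. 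Finally, since $1 + 1/k \le 4/3 < 3/2$ for $k \ge 3$, the error term $3 c_k n^{1+1/k}$ is of strictly lower order than the main term $\sqrt{t-1}\,(n/2)^{3/2}$, which explains why this reduction is invoked only in the regime $k \ge 3$.
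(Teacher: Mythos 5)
Your first half coincides exactly with the paper's proof: you take $F$ (the paper's $G_{\Delta}$) to be the edges lying in triangles, observe that $G_0 = G - F$ is triangle-free and $C_{2k+1}$-free, and your argument that $G_0$ is in fact $K_{s,t}$-free is the same as the paper's Claim~1 --- a copy of $K_{s,t}$ in $G_0$ cannot be induced in $G$, so some edge sits inside a part, and that edge closes a triangle with two cross edges, contradicting that those cross edges survived the deletion. That part is correct and complete.

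The genuine gap is in bounding $\abs{F}$. The paper does this by citing the theorem of Gy\H{o}ri and Li that a $C_{2k+1}$-free graph contains at most $c_k n^{1+1/k}$ triangles; since each triangle contributes at most three edges to $F$, this gives $\abs{F} \le 3c_k n^{1+1/k}$ immediately (the factor $3$ comes from three edges per triangle, not from any apex-avoidance bookkeeping). You instead attempt to rederive the bound from the Bondy--Simonovits even-cycle theorem by finding a $C_{2k}$ in $F$ and rerouting one edge through a triangle apex, and you yourself identify the fatal obstacle: the apex may lie on the cycle. Your proposed repair --- pass to a subgraph of large minimum degree and ``invoke supersaturation for $C_{2k}$, so that some triangle-edge lies on a $C_{2k}$ avoiding its apex'' --- is a program, not a proof. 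Supersaturation yields many copies of $C_{2k}$ somewhere in a dense subgraph of $F$, but it does not place a copy through a specified edge, and even granting many copies of $C_{2k}$ through a fixed edge $xy \in F$, nothing you say rules out that every apex of every triangle on $xy$ lies on every such cycle; controlling this would require counting cycles through a fixed edge while avoiding a fixed vertex, which is essentially the content of the Gy\H{o}ri--Li theorem itself (a nontrivial result occupying its own paper, which the present paper uses as a black box). So your proof is incomplete at precisely the step that carries the arithmetic of the error term: either cite Gy\H{o}ri--Li as the paper does, or supply an actual proof that the number of triangle edges in a $C_{2k+1}$-free graph is $O(n^{1+1/k})$. (Your remaining remarks --- that the error term is lower order only for $k \ge 3$, which is why the reduction is used in that regime --- agree with the paper's Remark~\ref{split}.)
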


\begin{proof}
Let $G$ be a $C_{2k+1}$-free graph containing no induced copy of $K_{s,t}$. Let $G_{\Delta}$ be the subgraph of $G$ consisting of the edges which are contained in the triangles of $G$. Let $G \setminus G_{\Delta}$ be the graph obtained after deleting all the edges of $G_{\Delta}$ from $G$. Of course, $G \setminus G_{\Delta}$ is triangle free, and the number of edges in $G_{\Delta}$ is at most three times the number of triangles in $G$.

Gy\H{o}ri and Li \cite{Gyori_Li} showed that in a $C_{2k+1}$-free graph there are at most $c_k n^{1+1/k}$ triangles where $c_k$ is a constant depending only on $k$. This implies that $\abs{E(G_{\Delta})} \le 3c_k n^{1+1/k}$. 


\begin{claim}
	\label{kst_removed_triangles_claim}
	$G \setminus G_{\Delta}$ is $K_{s,t}$-free.
\end{claim}

\begin{proof}
	Assume for a contradiction that there is a (not necessarily induced) copy of $K_{s,t}$ in $G \setminus G_{\Delta}$, and let $A$, $B$ be its color classes. Then since $G$ contains no induced copy of $K_{s,t}$, there must be an edge $xy$ of $G$ which is contained in $A$ or $B$. In either case, it is easy to see that $xy$ and some two edges of the $K_{s,t}$ form a triangle. However, this means that these two edges of $K_{s,t}$ are contained in $G_{\Delta}$ by definition, contradicting the assumption that this $K_{s,t}$ is contained in $G \setminus G_{\Delta}$.  Therefore, the claim follows. 
\end{proof}

By Claim \ref{kst_removed_triangles_claim}, $G \setminus G_{\Delta}$ is $K_{s,t}$-free. Moreover, as $G \setminus G_{\Delta}$ is a triangle free subgraph of $G$, we have $\abs{E(G \setminus G_{\Delta})} \le ex(n, \{C_3, C_{2k+1}, K_{2,t}\})$.

Therefore, $\abs{E(G)} = \abs{E(G_{\Delta})}  + \abs{E(G \setminus G_{\Delta})} \le 3c_k n^{1+1/k} + ex(n, \{C_3, C_{2k+1}, K_{s,t}\})$, proving Proposition \ref{indtononind}.
\end{proof}

Proposition \ref{oddcycle_triangle_bipartite} will show that $ex(n, \{C_3, C_{2k+1}, K_{2,t}\}) \le \sqrt{t-1} \cdot \left(n/2\right)^{3/2} (1+o(1))$ for all $k, t \ge 2$. Combining this with Proposition \ref{indtononind} for $s =2$, we get,
\begin{equation}
\label{eq:sumofgdeltaandgminusgdelta}
ex(n, \{C_{2k+1}, K_{2,t}\text{-ind}\}) \le 3c_k n^{1+1/k} + \sqrt{t-1} \cdot \left(\frac{n}{2}\right)^{3/2} (1+o(1)).
\end{equation}

Since $k \ge 3$,  clearly $3c_k n^{1+1/k} = o(n^{3/2})$, completing the proof of Theorem \ref{mainthm} in the case $k \ge 3$.

\begin{remark}
	\label{split}
Note that when $k = 2$, the number of edges in $G_{\Delta}$ can be as large as $\Theta(n^{3/2})$. For example, observe that this is the case in the Bollob\'as-Gy\H ori construction which is stated after Theorem \ref{C_4C_5} (note that every edge in the construction is contained in a triangle). So using Proposition \ref{indtononind}, we cannot get a better upper bound than $(3c_2 + \sqrt{t-1} \cdot \left(\frac{1}{2}\right)^{3/2}) n^{3/2}(1+o(1))$ where $c_2 > 0$ is a constant. Therefore, the current approach does not work in the case $k = 2$; we will give a different proof for this case in Section \ref{C_5}.
\end{remark}

It only remains to prove Proposition \ref{oddcycle_triangle_bipartite}. 

\begin{prop}
	\label{oddcycle_triangle_bipartite}
For all integers $k \ge 2, t \ge 2$, we have
$$ex(n, \{C_3, C_{2k+1}, K_{2,t}\}) \le \sqrt{t-1} \cdot \left(\frac{n}{2}\right)^{3/2} (1+o(1)).$$
\end{prop}

\begin{remark}
	Note that Proposition \ref{oddcycle_triangle_bipartite} follows from Theorem \ref{noninduced}. However, as our proof is simple we present it below for completeness. 
	
	Also note that the proof given in this section provides a different proof of Theorem \ref{noninduced} when $k \ge 3$ and $s = 2$. A key new idea is to remove all the edges contained in triangles first (recall that they are negligible in this case), and this helps avoid some of the technicalities that would otherwise arise in the proof. For example, after destroying all of the triangles, it is straightforward that for any vertex $v$ in the resulting graph, $N_1(v)$ does not induce any edges and (as will be shown in the proof of Proposition \ref{oddcycle_triangle_bipartite}) it is also easier to argue that $N_2(v)$ does not induce many edges.
\end{remark}

\begin{proof}[Proof of Proposition \ref{oddcycle_triangle_bipartite}]
Let $d$ be the average degree of a graph $H =(V(H), E(H))$ containing no triangle, $C_{2k+1}$ or $K_{2,t}$. Our aim is to upper bound $d$. If a vertex has degree smaller than $d/2$, then we can delete the vertex and the edges incident on it without decreasing the average degree. 
and it is easy to see that if the desired upper bound on the average degree holds for the new graph, then it holds for the original graph as well. Therefore, we may assume that $H$ has minimum degree at least $d/2$.

Suppose there is a vertex $u$ in $H$ with degree more than $4d$. Of course, $\abs{N_1(u)} > 4d$. Since the minimum degree in $H$ is at least $d/2$, there are at least $(d/2 - 1) \abs{N_1(u)}$ edges between $N_1(u)$ and $N_2(u)$ as there are no edges contained in $N_1(u)$ because $H$ is triangle free. On the other hand, if a vertex $w \in N_2(u)$ is adjacent to $t$ vertices in $N_1(u)$, then $u, w$ and these $t$ vertices form a $K_{2,t}$, a contradiction. So there are at most $\abs{N_2(u)}(t-1)$ edges between $N_1(u)$ and $N_2(u)$. Combining, we get, $\abs{N_2(u)}(t-1) \ge (d/2 - 1) \abs{N_1(u)} > (d/2 - 1) 4d = 2d^2-4d$. Now since, $n \ge \abs{N_1(u)}+\abs{N_2(u)}$, we get $n > 4d + (2d^2-4d)/(t-1) = 2d^2/(t-1)$. Therefore, $d < \sqrt{(t-1)}\sqrt{n/2}$ and the bound in Proposition \ref{oddcycle_triangle_bipartite} holds because $\abs{E(H)} = dn/2$. 

So from now on we can assume that the maximum degree $d_{max}$ in $H$ is at most $4d$. First let us show that $N_2(v)$ doesn't induce many edges. 

\begin{claim}
\label{N_2_size}
The number of edges induced by $N_2(v)$ is at most $(2k-4)16d^2$.
\end{claim}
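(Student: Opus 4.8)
The plan is to bound $|E(H[N_2(v)])|$ by first controlling the size of $N_2(v)$ and then showing that $H[N_2(v)]$ has no long path. Since the maximum degree of $H$ is at most $4d$, we have $|N_1(v)| \le 4d$ and each vertex of $N_1(v)$ has at most $4d$ neighbours, so $|N_2(v)| \le (4d)^2 = 16d^2$. It therefore suffices to prove $|E(H[N_2(v)])| \le (2k-4)|N_2(v)|$.

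The key observation is that a path inside $N_2(v)$ can be closed into an odd cycle by a short detour through $v$. Concretely, if $x,y \in N_2(v)$ are joined by a path $Q$ of length $\ell$ using only vertices of $N_2(v)$, and if $x$ has a neighbour $a \in N_1(v)$ and $y$ has a neighbour $b \in N_1(v)$ with $a \ne b$, then $v,a,Q,b,v$ is a closed walk of length $\ell+4$; since $a,b \in N_1(v)$ are distinct from each other, from the internal vertices of $Q$ (which lie in $N_2(v)$), and from $v$, this walk is in fact a cycle of length $\ell+4$. Taking $\ell = 2k-3$ produces a $C_{2k+1}$, contradicting that $H$ is $C_{2k+1}$-free. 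Hence $H[N_2(v)]$ has no path of length $2k-3$ whose two endpoints admit distinct neighbours in $N_1(v)$.

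I would then convert the absence of long paths into the edge bound via a depth-first search in each connected component of $H[N_2(v)]$: every edge joins a vertex to one of its ancestors, and if the longest path has at most $2k-4$ edges then each depth-first tree has height at most $2k-4$, so each vertex has at most $2k-4$ ancestors and hence at most $2k-4$ incident edges going upward. Charging each edge to its deeper endpoint gives $|E(H[N_2(v)])| \le (2k-4)|N_2(v)| \le (2k-4)\,16d^2$, which is the claim. (Alternatively one may invoke the Erd\H{o}s--Gallai bound for path-free graphs, which is even stronger and leaves a factor of roughly $2$ of slack against the target.)

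The main obstacle is the \emph{degenerate} configuration excluded in the second paragraph: a path $x_0 x_1 \cdots x_{2k-3}$ of length $2k-3$ whose endpoints share a single common attachment, i.e.\ $N(x_0)\cap N_1(v) = N(x_{2k-3})\cap N_1(v) = \{a\}$. Here the detour through $v$ is blocked, and $a,x_0,\ldots,x_{2k-3},a$ only yields a $C_{2k-1}$, which is not forbidden when $k \ge 3$. To rule this out I would use the remaining hypotheses: triangle-freeness forbids $a$ from being adjacent to two consecutive $x_i$, while $K_{2,t}$-freeness applied to the pair $v,x_i$ gives $|N(x_i)\cap N_1(v)| \le t-1$, and applied inside $N_2(v)$ restricts common neighbourhoods; combined with the minimum degree bound $\delta(H) \ge d/2$, each $x_i$ is forced to have many further neighbours in $N_2(v)$, which can be used to reroute or extend the path so that its endpoints acquire distinct attachments in $N_1(v)$, recreating a genuine $C_{2k+1}$. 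Since the target bound carries a factor of $2$ of slack over what the path-counting alone delivers, it should suffice to show that such degenerate paths cannot persist; this case analysis, rather than the main counting, is where I expect the real work to lie.
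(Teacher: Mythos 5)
Your opening two paragraphs are sound and match the paper's key observation: a path of length $2k-3$ inside $N_2(v)$ whose endpoints have \emph{distinct} neighbours $a \ne b$ in $N_1(v)$ closes through $v$ into a $C_{2k+1}$, and your DFS count (or Erd\H{o}s--Gallai) converts path-freeness into the edge bound. But the degenerate case you defer to the final paragraph is a genuine gap, and the repair you sketch cannot work, because the degenerate configuration is not locally contradictory and hence cannot be ``ruled out'' by any case analysis against the forbidden subgraphs. Concretely, let $x_0 x_1 \cdots x_{2k-3}$ be a path in $N_2(v)$ with $N(x_0) \cap N_1(v) = N(x_{2k-3}) \cap N_1(v) = \{a\}$. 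Closing through $a$ yields only a $C_{2k-1}$, which is allowed. If some internal $x_i$ attaches to $b \ne a$, closing the initial segment through $v$ gives a cycle of length $i+4$ and closing the terminal segment gives length $2k+1-i$; with $1 \le i \le 2k-4$ neither equals $2k+1$, and two internal attachments $x_i, x_j$ with $i<j$ give length $(j-i)+4 < 2k+1$. So every cycle produced by the path and its attachments has permitted length; triangle-freeness, $K_{2,t}$-freeness and the degree hypotheses are likewise compatible (the remaining $\ge d/2$ neighbours of each $x_i$ can sit in $N_3(v)$, so you cannot even extend the path within $N_2(v)$). Hence $H[N_2(v)]$ may genuinely contain paths of length $2k-3$, and the premise of your DFS count --- that the \emph{whole} induced subgraph is $(2k-3)$-path-free --- can simply fail.

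The paper's proof is engineered precisely to sidestep this case rather than confront it. It assigns each vertex of $N_2(v)$ to a single attachment vertex, partitioning $N_2(v)$ into sets $S'_q$, $q \in N_1(v)$; it then $2$-colours these sets at random and keeps only the bichromatic edges, which for some colouring retains at least half of $H[N_2(v)]$. In the retained graph, any path of odd length $2k-3$ alternates colours, so its endpoints lie in differently coloured --- hence distinct --- sets $S'_{x_1}$, $S'_{x_2}$, which supplies the two distinct attachments $x_1 \ne x_2$ and a genuine $C_{2k+1}$. Erd\H{o}s--Gallai applied to the retained graph gives at most $\frac{2k-4}{2}\abs{N_2(v)}$ edges, whence $\abs{H[N_2(v)]} \le (2k-4)\abs{N_2(v)} \le (2k-4)16d^2$. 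Note that the factor-of-$2$ slack you observed and hoped to spend on the degenerate case is exactly what this edge-discarding step consumes; it is not spare room. To repair your argument, replace the deferred case analysis with this colouring device (or any equivalent mechanism forcing the endpoints of long paths into distinct attachment classes), applying your path-to-cycle observation and your edge count only to the retained subgraph.
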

\begin{proof}
For each $q \in N_1(v)$, let $S_q$ be the set of neighbors of $q$ in $N_2(v)$. Of course the sets $\{S_q \mid q \in N_1(v) \}$ cover all the vertices of $N_2(v)$.  So we can choose sets $S'_q \subset S_q$ such that $\{S'_q \mid q \in N_1(v) \}$ partition $N_2(v)$. Note that the sets $S'_q$ do not induce any edges as $H$ is triangle-free, so the edges induced by $N_2(v)$ are between the sets $S'_q$, $q \in N_1(v)$. Color each set $S'_q$ red or blue with probability $1/2$. Each vertex in $N_2(v)$ is colored by the color of the set it is contained in. It is easy to see that there exists a coloring of the sets $S'_q$ such that at least half of all the edges induced by $N_2(v)$ are not monochromatic. (Indeed, the probability that an edge of $N_2(v)$ is not monochromatic is $1/2$.)  If there is a path of length $2k-3$ in the graph $B$ consisting of these non-monochromatic edges, then since $2k-3$ is odd, the end vertices $y_1, y_2$ of the path are of different colors, so there exist distinct vertices $x_1, x_2 \in N_1(v) $ such that $x_1y_1, x_2y_2 \in E(H)$. However, then $vx_1, vx_2, x_1y_1, x_2y_2$ and this path of length $2k-3$ form a $2k+1$ cycle in $H$, a contradiction. Therefore, using Erd\H{o}s-Gallai theorem \cite{Erd_Gallai} and the fact that $B$ contains at least half of the edges induced by $N_2(v)$, we get 
\begin{equation*}
\abs{H[N_2(v)]} \le \frac{2k-3-1}{2} \abs{N_2(v)} \cdot 2 = (2k-4)\abs{N_2(v)} \le (2k-4) d_{max}^2 \le (2k-4)16d^2,
\end{equation*}
proving the claim.
\end{proof}

By the Blakley-Roy inequality, there are at least $nd^3$ 3-walks in $H$, so there exists a vertex $v$ which is the first vertex of at least $d^3$ 3-walks. A 3-walk of the form $vv_1v_2v_3$ where $v_i \in N_i(v)$ for $1 \le i \le 3$ is called as \emph{good}. If a 3-walk is not good but has $v$ as its first vertex, then either $v_2 = v$ or $v_2 \in N_2(v), v_3 \in N_1(v)$  or $v_2 \in N_2(v), v_3 \in N_2(v)$. (Note that here we used that $N_1(v)$ doesn't contain any edges - as $H$ is triangle-free.) Below we show that the number of 3-walks starting from $v$ that are not good are very few.

Number of 3-walks starting from $v$ where $v_2 = v$ is at most $d_{max}^2 \le 16d^2$. Indeed, there are at most $d_{max}$ choices for $v_1$ and $v_3$ since they are both adjacent to $v$. Now we estimate the number of  3-walks starting from $v$ where $v_2 \in N_2(v)$, $v_3 \in N_1(v)$. Any given $v_2 \in N_2(v)$ has at most $t-1$ neighbors in $N_1(v)$ for otherwise we can find a copy of $K_{2,t}$ in $H$, a contradiction. So the number of such 3-walks is at most $d_{max}^2(t-1) \le 16(t-1)d^2$ because there are at most $d_{max}, d_{max}$ and $t-1$ choices for $v_1, v_2$ and $v_3$ respectively. Finally, we estimate the number of 3-walks where $v_2 \in N_2(v)$ and $v_3 \in N_2(v)$. So the edge $v_2v_3 \in H[N_2(v)]$. For a given edge $xy \in H[N_2(v)]$, either $v_2 = x, v_3 = y$ or $v_2 = y, v_3 = x$ and for fixed $v_2, v_3$, there are at most $t-1$ choices for $v_1 \in N_1(v)$. Therefore, the number of such 3-walks is at most $\abs{H[N_2(v)]}2(t-1)$. Now by Claim \ref{N_2_size}, this is at most $ (2k-4)16d^2\cdot2(t-1) = 64(k-2)(t-1)d^2$. Therefore, by summing these estimates, we get that the number of 3-walks starting from $v$ that are not good is at most $16d^2+ 16(t-1)d^2 + 64(k-2)(t-1)d^2 = 16d^2 + 16(4k-7)(t-1)d^2 \le 32(4k-7)(t-1)d^2.$ 

Thus the number of good 3-walks is at least $d^3 -32(4k-7)(t-1)d^2$. Let us consider the graph $B'$ consisting of the last edges of these good 3-walks. Observe that $B'$ is a $K_{2,t}$-free bipartite graph with color classes $N_2(v)$ and $N_3(v)$. It is easy to see that an edge of $B'$ belongs to at most $t-1$ good 3-walks, otherwise we can find a $K_{2,t}$. Therefore there are at least $(d^3 -32(4k-7)(t-1)d^2)/(t-1)$ edges in $B'$. It follows from \eqref{bipartiteturan} (or by a simple double counting of cherries)  that a bipartite $K_{2,t}$-free graph on $n$ vertices (or less) contains at most $\sqrt{t-1}(n/2)^{3/2} (1+o(1))$ edges, so $B'$ contains at most this many edges. Combining the two estimates, we get 
$$(d^3 -32(4k-7)(t-1)d^2)/(t-1) \le  \sqrt{t-1}(n/2)^{3/2} (1+o(1)).$$
Rearranging, we get $(d-c_{k,t})^3 \le  (t-1)^{3/2}(n/2)^{3/2} (1+o(1))$
where $c_{k,t} = \frac{32}{3}(4k-7)(t-1)$. Therefore, $d \le \sqrt{t-1}(n/2)^{1/2} (1+o(1))$ which implies that $$\abs{E(H)} = nd/2 \le \sqrt{t-1}(n/2)^{3/2}(1+o(1))$$ completing the proof of Proposition \ref{oddcycle_triangle_bipartite}, and concluding this subsection.
\end{proof}

\subsection{When $k = 2$}
\label{C_5}

Here we prove Theorem \ref{mainthm} in the case $k =2$ and Theorem \ref{C_4C_5} together. 

\vspace{3mm}

Let $d$ be the average degree of $G$. It suffices to show that $d \le \sqrt{(\max(3,t)-1)} \sqrt{n/2} (1+o(1))$. As usual we can assume that each vertex of $G$ has degree at least $d/2$, for otherwise we can delete it and the edges incident on it to obtain a new graph with average degree at least $d$.

\begin{claim}
\label{limited_cherries}
Any two non-adjacent vertices $u, v$ in $G$ have at most $\max(3,t)-1$ common neighbors. 
\end{claim}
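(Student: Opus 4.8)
The plan is to let $w_1, \dots, w_m$ denote the common neighbors of $u$ and $v$ and to show that a large value of $m$ forces one of the two forbidden configurations to appear. Since $\max(3,t)-1 = \max(2,\,t-1)$, the statement to prove is exactly $m \le \max(2,\,t-1)$, so the only interesting case is $m \ge 3$, where I must show $m \le t-1$; when $m \le 2$ the bound is immediate because $\max(2,\,t-1) \ge 2$.

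First I would observe that the $C_5$-free hypothesis forces the common neighbors to be pairwise non-adjacent as soon as $m \ge 3$. Indeed, suppose some two of them, say $w_1$ and $w_2$, are adjacent, and pick any third common neighbor $w_3$, which exists since $m \ge 3$. Then the five vertices $u, w_1, w_2, v, w_3$ are distinct, and the consecutive pairs $uw_1,\, w_1w_2,\, w_2v,\, vw_3,\, w_3u$ are all edges of $G$: the four edges $uw_1, w_2v, vw_3, w_3u$ are present because the $w_i$ are common neighbors of $u$ and $v$, while $w_1w_2$ is an edge by assumption. This is a copy of $C_5$ in $G$, contradicting that $G$ is $C_5$-free. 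Hence no two common neighbors can be adjacent.

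Consequently, when $m \ge 3$ the set $\{u, v, w_1, \dots, w_m\}$ induces a $K_{2,m}$ in $G$: besides the $2m$ edges running from $\{u,v\}$ to the $w_i$, the only potential edges inside this set are $uv$ or some $w_iw_j$, and both are excluded ($u,v$ are non-adjacent by hypothesis, and the $w_i$ are pairwise non-adjacent by the previous step). Since $G$ contains no induced copy of $K_{2,t}$, this induced $K_{2,m}$ cannot contain an induced $K_{2,t}$; but any $t$ of the pairwise non-adjacent vertices $w_i$ together with $u, v$ already induce a copy of $K_{2,t}$, so we must have $m \le t-1$.

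Combining the two cases yields $m \le \max(2,\,t-1) = \max(3,t)-1$, as required. The only point one must get right — and what I regard as the crux rather than a genuine obstacle — is the interplay between the two constraints: the $C_5$-free hypothesis is precisely what eliminates edges among three or more common neighbors, and only once independence is established does the induced-$K_{2,t}$-free hypothesis take effect to cap the count at $t-1$. I do not anticipate any real difficulty beyond bookkeeping of which of the two bounds, $2$ or $t-1$, is active for a given value of $t$.
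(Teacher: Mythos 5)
Your proof is correct and is essentially the paper's argument run in the contrapositive direction: the paper supposes $\max(3,t)$ common neighbors, uses the absence of an induced $K_{2,t}$ to force an edge $xy$ inside the common neighborhood, and then closes the cycle $uxyvw$ with a third common neighbor $w$ --- exactly your $C_5$ on $u, w_1, w_2, v, w_3$, with the two forbidden configurations playing the same roles in the same combination. There is no gap, and your case bookkeeping for $t=2$ versus $t\ge 3$ matches the paper's single bound $\max(3,t)-1$.
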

\begin{proof}
Suppose for a contradiction that $u$ and $v$ have $\max(3,t)$ common neighbours and let $S$ denote the set of these common neighbours. Then since $u, v$ and vertices in $S$ cannot form an induced copy of $K_{2,t}$, there must be an edge $xy$ among the vertices in $S$. However, then $uxyvw$ is a five cycle in $G$ for some $w \in S \setminus \{u,v\}$, a contradiction.
\end{proof}

Now we will show that we can assume the maximum degree $d_{max}$ of $G$ is at most $6d$. Suppose that there is a vertex $v$ in $G$ with degree more than $6d$. Of course $\abs{N_1(v)} > 6d$. Since $G$ is $C_5$-free, there is no path on $4$ vertices in the subgraph of $G$ induced by $N_1(v)$. Therefore, the number of edges induced by $N_1(v)$ is at most $(4-2)/2 \cdot \abs{N_1(v)} =  \abs{N_1(v)}$ by Erd\H{o}s-Gallai theorem \cite{Erd_Gallai}. Since the minimum degree is at least $d/2$, the sum of degrees of the vertices in $N_1(v)$ is at least $d/2 \cdot \abs{N_1(v)}$. In this sum, the edges between $v$ and $N_1(v)$ are counted once, the edges induced by $N_1(v)$ are counted twice, so the number of edges between $N_1(v)$ and $N_2(v)$ is at least $d/2 \abs{N_1(v)} -\abs{N_1(v)} -2\abs{N_1(v)} = (d/2-3) \abs{N_1(v)}$. On the other hand, a vertex $u$ in $N_2(v)$ is adjacent to at most $\max(3,t)-1$ vertices in $N_1(v)$ by applying Claim \ref{limited_cherries} to $u$ and $v$. So there are at most $\abs{N_2(v)}(\max(3,t)-1)$ edges between $N_1(v)$ and $N_2(v)$. So combining, we get $ \abs{N_2(v)}(\max(3,t)-1) \ge (d/2-3) \abs{N_1(v)} > (d/2-3)6d = 3d^2-18d$. Since, $n \ge \abs{N_1(v)} + \abs{N_2(v)}$, we have that $n >6d + (3d^2-18d)/(\max(3,t)-1) \ge (3d^2-6d)/(\max(3,t)-1) \ge 2d^2/(\max(3,t)-1)$ whenever $d \ge 6$. Therefore, $d < \sqrt{(\max(3,t)-1)} \sqrt{n/2} (1+o(1))$, so $\abs{E(G)} = nd/2 < \sqrt{(\max(3,t)-1)} (n/2)^{3/2}(1+o(1))$, proving Theorem \ref{mainthm} in the case $k = 2$ and $t \ge 3$ and Theorem \ref{C_4C_5}. 

So from now on, we may assume $d_{max} \le 6d$. A 3-path is called \emph{good} if the second and fourth vertex in it are not adjacent.

\begin{claim}
	\label{good3paths_lower}
 There is an edge $xy$ in $G$ such that the number of good 3-paths starting with $xy$ is at least $2d^2 -84d$.
\end{claim}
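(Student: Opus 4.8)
The plan is to combine the Blakley--Roy inequality with a careful accounting of the ``degenerate'' walks that have to be discarded. First I would invoke Blakley--Roy to get at least $nd^3$ \mbox{3-walks} in $G$. Since every 3-walk $v_0v_1v_2v_3$ starts with a well-defined edge $v_0v_1$, the total number of 3-walks equals $\sum_{e \in E(G)}(\text{number of 3-walks starting with } e)$, where a 3-walk ``starting with $xy$'' may traverse the edge in either direction. As $\abs{E(G)} = nd/2$, averaging yields an edge $xy$ that starts at least $nd^3/(nd/2) = 2d^2$ 3-walks. I will fix this edge and show that all but at most $84d$ of these $2d^2$ 3-walks are in fact \emph{good} 3-paths.

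The key structural input is that for every vertex $v$ the induced subgraph $G[N_1(v)]$ contains no path on four vertices: if $abcw$ were such a path with $a,b,c,w \in N_1(v)$, then $vabcwv$ would be a copy of $C_5$, contradicting that $G$ is $C_5$-free. By the Erd\H{o}s--Gallai theorem \cite{Erd_Gallai}, a graph on $m$ vertices with no path on four vertices has at most $m$ edges, so the number of edges inside $N_1(v)$ is at most $\deg(v) \le d_{max} \le 6d$. I will also use the trivial bound $\abs{N_1(x) \cap N_1(y)} \le \deg(x) \le d_{max}$.

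It then remains to bound the 3-walks starting with $xy$ that fail to be good 3-paths. Consider the orientation $xyv_2v_3$ (the orientation $yxv_2v_3$ is symmetric). Such a walk fails to be a 3-path only if $v_2 = x$, or $v_3 = y$, or $v_3 = x$; the first two possibilities contribute at most $d_{max}$ walks each, while the third forces $v_2 \in N_1(x) \cap N_1(y)$ and so contributes at most $d_{max}$. A 3-path $xyv_2v_3$ fails to be good exactly when its second and fourth vertices $y$ and $v_3$ are adjacent; then $v_2v_3$ is an edge of $G[N_1(y)]$, so the number of such paths is at most twice the number of edges inside $N_1(y)$, i.e.\ at most $2d_{max}$. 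Summing these contributions over both orientations gives at most $2 \cdot 5 d_{max} = 10 d_{max} \le 60d \le 84d$ bad walks, so at least $2d^2 - 84d$ good 3-paths start with $xy$, as required.

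The main obstacle---and the only nonroutine point---is the count of non-good paths: one must recognize that ``second and fourth vertex adjacent'' produces a triangle lying inside a single neighborhood, and that $C_5$-freeness forces each neighborhood to contain no path on four vertices, hence (by Erd\H{o}s--Gallai) to span only $O(d)$ edges. Everything else is routine bookkeeping of degenerate walks, each type bounded by $d_{max} \le 6d$, and the slack between $60d$ and the claimed $84d$ leaves ample room for any looser constants in the enumeration.
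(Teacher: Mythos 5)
Your proof is correct and follows essentially the same route as the paper: Blakley--Roy plus averaging over the $\abs{E(G)} = nd/2$ edges to find an edge $xy$ starting at least $2d^2$ 3-walks, then discarding the degenerate walks ($3d_{max}$ per orientation) and the non-good 3-paths via the observation that $C_5$-freeness forbids a path on four vertices inside any neighborhood, so Erd\H{o}s--Gallai bounds $\abs{G[N_1(x)]}, \abs{G[N_1(y)]} \le d_{max}$. Your bookkeeping is in fact slightly tighter than the paper's ($2d^2 - 60d$ versus $2d^2 - 36d - 48d = 2d^2 - 84d$, since you charge each edge of $G[N_1(y)]$ only to the orientation $xyv_2v_3$ rather than to both orientations as the paper's factor of $4$ does), which comfortably implies the claimed bound.
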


\begin{proof}
By the Blakley-Roy inequality, there are at least $nd^3$ 3-walks in $G$, so there is an edge $xy$ that is the first edge of at least $nd^3/(\abs{E(G)} = nd^3/(nd/2) = 2d^2$ 3-walks. First let us show that most of these $2d^2$ 3-walks are 3-paths. Suppose $xyzw$ is a 3-walk that is not a 3-path. Then either $z = x$ or $w = y$ or $w = x$. In each of these cases, there are at most $d_{max}$ 3-walks, so in total there are at most $3d_{max} \le 18d$ such 3-walks. Similarly there are at most $18d$ 3-walks of the form $yxzw$ that are not 3-paths. Therefore, there are at least $2d^2 -36d$ 3-paths starting with the edge $xy$.

If a 3-path $xyzw$ or $yxzw$ is not good, then the edge $zw \in G[N_1(y)]$ or $zw \in G[N_1(x)]$ respectively. Moreover, given an edge $zw \in G[N_1(y)]$ or $zw \in G[N_1(x)]$ there are at most 4 paths starting with the edge $xy$ and containing $zw$ as its last edge, so the total number of 3-paths starting with the edge $xy$ that are not good is at most $4(\abs{G[N_1(x)]} + \abs{G[N_1(y)]})$. Since the neighborhood of a vertex doesn't contain a path on $4$ vertices, $$\abs{G[N_1(x)]} + \abs{G[N_1(y)]} \le (4-2)/2 \cdot \abs{N_1(x)} + (4-2)/2 \cdot \abs{N_1(y)} \le 2d_{max} \le 12d$$ by Erd\H{o}s-Gallai theorem \cite{Erd_Gallai}. So the number of good 3-paths starting with the edge $xy$ is at least $2d^2 -36d - 48d = 2d^2 -84d$.
\end{proof}

\begin{claim}
	\label{w}
For any vertex $w$ different from $x$ and $y$, at most $\max(3,t)-1$ good 3-paths start with the edge $xy$ and have $w$ as their last vertex. 
\end{claim}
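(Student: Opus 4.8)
Fix the edge $xy$ and a vertex $w \notin \{x,y\}$. Every good $3$-path starting with the edge $xy$ has first edge $xy$, so its second vertex is either $y$ (giving a path $xyzw$, call it \emph{Type~1}) or $x$ (giving a path $yxzw$, call it \emph{Type~2}). In either case the third vertex $z$ satisfies $zw \in E(G)$ and is adjacent to the second vertex, and the goodness condition says precisely that the second vertex is \emph{not} adjacent to $w$. Thus the middle vertex of a Type~1 good path ending at $w$ is a common neighbor of $y$ and $w$ with $yw \notin E(G)$, and the middle vertex of a Type~2 good path ending at $w$ is a common neighbor of $x$ and $w$ with $xw \notin E(G)$; moreover a good path of a given type ending at $w$ is completely determined by its middle vertex (the remaining vertices are forced to be $x$, $y$, $w$).

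\textbf{The easy case (a single type occurs).} Suppose every good $3$-path starting with $xy$ and ending at $w$ is of Type~1. Then $yw \notin E(G)$, and the (distinct) middle vertices are common neighbors of the non-adjacent pair $y,w$. By Claim \ref{limited_cherries} there are at most $\max(3,t)-1$ such common neighbors, hence at most $\max(3,t)-1$ good $3$-paths. The case where only Type~2 occurs is symmetric, using Claim \ref{limited_cherries} on the non-adjacent pair $x,w$.

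\textbf{The main obstacle (both types occur).} The naive bound of $\max(3,t)-1$ for each type separately would only give $2(\max(3,t)-1)$, which is too weak, so the crux is to show that the two types cannot coexist with more than a single shared middle vertex. Suppose a Type~1 good path $xyz_1w$ and a Type~2 good path $yxz_2w$ both exist; then $yw \notin E(G)$ and $xw \notin E(G)$. If $z_1 \ne z_2$, I claim $x,y,z_1,w,z_2$ are five distinct vertices: $x \ne y$ as $xy \in E(G)$; $w \notin\{x,y\}$ by hypothesis; $z_1 \notin \{y,w\}$ and $z_2 \notin\{x,w\}$ since $z_1,z_2$ are neighbors of $y,x$ and of $w$; $z_1 \ne x$ (else $xw \in E(G)$) and $z_2 \ne y$ (else $yw \in E(G)$). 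Since $xy, yz_1, z_1w, wz_2, z_2x$ are all edges, $x y z_1 w z_2 x$ is a $C_5$, contradicting that $G$ is $C_5$-free. Hence every Type~1 middle vertex equals every Type~2 middle vertex, so there is exactly one good path of each type, giving at most $2 \le \max(3,t)-1$ in total.

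\textbf{Conclusion.} In every case the number of good $3$-paths starting with $xy$ and ending at $w$ is at most $\max(3,t)-1$, which proves the claim. The decisive step is the last one: recognizing that applying Claim \ref{limited_cherries} twice is insufficient and that $C_5$-freeness must be invoked to collapse the two types to a single middle vertex.
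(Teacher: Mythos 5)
Your proof is correct and follows essentially the same route as the paper: the same split into the two path forms $xyzw$ and $yxzw$, the same $C_5$ built from a mixed pair with distinct middle vertices, and the same application of Claim \ref{limited_cherries} to the middle vertices when only one form occurs. The only cosmetic difference is that you argue directly (bounding the mixed case by $2 \le \max(3,t)-1$ after collapsing the middles) where the paper assumes $\max(3,t)$ paths and extracts a third path to force a $C_5$; the content is identical.
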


\begin{proof}
Suppose for a contradiction that $\max(3,t)$ good 3-paths start with the edge $xy$ and have $w$ as their last vertex. 

Let us suppose that among these good 3-paths, there is one of the form $xyz_1w$ and another of the form $yxz_2w$. Now if $z_1 \not = z_2$, then $xyz_1wz_2$ forms a $C_5$ in $G$, a contradiction. Therefore, $z_1 = z_2$. Now since $\max(3,t) \ge 3$, there must be another good 3-path starting with the edge $xy$ and having $w$ as its last vertex - it is either of the form $xyz'w$ or of the form $yxz'w$ for some $z' \not = z_1$. In the first case, $xyz'wz_1$ is a $C_5$ and in the second case $yxz'wz_1$ is a $C_5$ in $G$, a contradiction. Therefore, the $\max(3,t)$ good 3-paths starting with the edge $xy$ and having $w$ as their last vertex are all either of the form $xyv_1w, xyv_2w, \ldots, xyv_{\max(3,t)}w$ or of the form $yxv_1w, yxv_2w, \ldots, yxv_{\max(3,t)}w$ where $v_1, v_2, \ldots, v_{\max(3,t)}$ are distinct vertices. However, in the first case, $y$ and $w$ are non-adjacent (as these are good 3-paths) and in the second case $x$ and $w$ are non-adjacent. Moreover, in both of these cases they have  $v_1, v_2, \ldots, v_{\max(3,t)}$ as their common neighbors, contradicting Claim \ref{limited_cherries}.
\end{proof}

It follows from Claim \ref{good3paths_lower} and Claim \ref{w} that there are at least $(2d^2 -84d)/(\max(3,t)-1)$ vertices in $G$. Therefore, $(2d^2 -84d)/(\max(3,t)-1) \le n$. Simplifying, we get $$(d-21)^2 \le \frac{n}{2}(\max(3,t)-1)+441.$$ So, $d \le \sqrt{(\max(3,t)-1)} \sqrt{n/2} (1+o(1)),$ implying that $$\abs{E(G)} = nd/2 \le \sqrt{(\max(3,t)-1)} (n/2)^{3/2}(1+o(1)),$$ completing the proof of Theorem \ref{mainthm} in the cases $k = 2$ and $t \ge 3$ and Theorem \ref{C_4C_5} (after putting $k = t = 2$).

\section*{Acknowledgements}
The research of the authors is partially supported by the National Research, Development and Innovation Office  NKFIH, grant K116769. We thank Casey Tompkins for bringing \cite{Loh_Tait_Timmons} to our attention.

%
%
%
%

\end{document}